\newcommand\Sec{\mathop{\mathrm{Sec}}\nolimits}
\newcommand\hh{\mathbf h}
\newcommand\HH{\mathbf H}
\newcommand\BB{\mathbf B}
\newcommand\vv{\mathbf v}
\newtheorem{lem}{Lemma}
\newtheorem*{thm}{Theorem}
\theoremstyle{remark}
\newtheorem*{rem}{Remark}
\def\fvecfill{$\m@th\hbox{\raisebox{-5.75pt}[1.5pt][0pt]{$\!\mathord\ulcorner$}}
\mkern-5mu
\cleaders\hbox{\raisebox{-0.5pt}[1.5pt][0pt]{$\!\mathord-$}}\hfill
\mkern-5mu
\mathord{\hbox{\raisebox{-5.75pt}[1.5pt][0pt]{$\!\mathord\urcorner\!$}}}$}
\def\fentkapcs { \overset{\hbox{\fvecfill}} } 
\begin{document}
\title{Homotheties of Finsler manifolds\thanks{Supported by National Science Research Foundation OTKA No.\ NK68040.}}
\author{Rezs\H o L.~Lovas\and J\'ozsef Szilasi}
\date{}
\maketitle
Keywords: Finsler manifold, homothety.

AMS classification: 53C60.

\begin{abstract}
We give a new and complete proof of the following theorem, discovered by Detlef Laugwitz: (forward) complete and connected finite dimensional Finsler manifolds admitting a proper homothety are Minkowski vector spaces. More precisely, we show that under these hypotheses the Finsler manifold is isometric to the tangent Minkowski vector space of the fixed point of the homothety via the exponential map of the canonical spray of the Finsler manifold.
\end{abstract}

\section{Introduction and history}

In the 17th century John Wallis managed to prove Euclid's parallel postulate
(EPP) by assuming a new axiom: \emph{to every figure there exists a similar
figure of arbitrary magnitude}. Later in his book `Euclid Vindicated from every
flaw' (1733) G.~G.~Saccheri pointed out that Wallis could have proved EPP by assuming only the existence of \emph{two} similar but uncongruent \emph{triangles}. Wallis' clever observation implies the collapse of similarity theory in hyperbolic geometry. After the discovery of Riemannian geometry it turned out that this phenomenon is almost typical: Riemannian manifolds admitting a proper similitude are rare. More precisely, Euclidean spaces are \emph{characterized} by the existence of a proper similitude among (complete and connected) Riemannian manifolds. Somewhat surprisingly it is not easy to find a complete proof of this important fact in the literature (at least we did not manage to find one). In his excellent textbook \emph{Differential and Riemannian Geometry} Detlef Laugwitz formulates the statement as follows:
\begin{quote}
`If a complete Riemannian space allows a proper similarity mapping onto itself,
then it is a Euclidean space.'
\end{quote}
(see \cite[Theorem 13.6.1]{Laug1}). We think, however, that his proof is incomplete. Laugwitz shows that if there is a contractive homothety of the manifold, then it has a fixed point, and the holonomy group is trivial in the fixed point. From this he immediately deduces that the space is Euclidean. This implication is in general false (think of the flat torus), and he does not explain why it is still true in this case. We note that Kobayashi and Nomizu state a weaker assertion in their book (indirectly): there exists a \emph{local} isometry from a neighbourhood of the point into a Euclidean space \cite[p.\ 242, Lemma 2]{KN}, see also \cite{Kob}.

The theorem was also generalized to Finsler manifolds by Laugwitz in the following form \cite[p.\ 268]{Laug}:
\begin{quote}
`A complete connected finite dimensional Finsler space $M$ which admits a
similitude $S$ is a Minkowski space.'
\end{quote}
The proof of this theorem (which implies immediately the Riemannian version)
seems to us also rather incomplete. Laugwitz shows that the Finsler manifold is
flat in the case of the existence of a contractive homothety. Then he refers to
p.\ 136 of Rund's monograph \cite{Rund}, and applies \'E.~Cartan's
characterization of `Minkowskian spaces'. However, the terminology of Rund's
book is strongly misleading here: `Minkowskian' actually means `locally
Minkowskian', i.e., a Finsler manifold with the property that all of its points
have a neighbourhood over which the Finsler function `depends only on the
position' (see \cite[3.2.4]{Matsumoto} and \cite[3.14, Theorem 3]{Setting}).

In common with E.~Heil, Laugwitz published another proof of the theorem \cite{HL}. This is much more convincing, but also suffers from some weakness: the use of the holonomy group of Rund's connection needs a much more careful elaboration, and the blowing up argument in the last step of the proof is far from being rigorous.

In this paper we wish to provide a new and self-contained proof of  Laugwitz's nice theorem which is already free from the flaws made by him (and them). Therefore we start along partially new lines. We use the simplest covariant derivative operator in Finsler geometry, Berwald's covariant derivative, rather than the Chern\,--\,Rund derivative (called only Rund derivative by Laugwitz). We avoid the use of the holonomy group, since we think that its complicatedness obscures the main points rather than clarifies them; instead, we only use the curvature tensors of Berwald's derivative. The use of Banach's fixed point theorem on a contractive homothety is rather standard, thus the first part of our proof, where we show that the manifold is locally Minkowskian, largely follows the proof of the Riemannian analogue in \cite{KN}. The essential new point in the proof is that a suitable global isometry is provided by the exponential map of a covariant derivative operator on the \emph{base manifold} at the fixed point of the homothety. The proof of the fact that this exponential map is a local isometry was inspired by \cite{Wilk}. The main original idea is the proof that it is a global isometry as well in our case.

\section{Notation and setup}

The term `manifold' will always mean a finite dimensional, connected smooth manifold which is Hausdorff and has a countable basis of open sets. If $M$ is a manifold, then $C^\infty(M)$ is the ring of smooth real-valued functions on $M$, $\tau:TM\to M$ is the tangent bundle of $M$, and $\mathfrak X(M)$ denotes the the $C^\infty(M)$-module of (smooth) vector fields of $M$. The tangent map of a smooth map $\varphi$ will be denoted by $\varphi_*$. If $D$ is a covariant derivative operator on $M$, $c$ is a smooth curve in $M$, and $\mathfrak X(c)$ is the module of smooth vector fields along $c$, then the induced covariant derivative operator along $c$ will be denoted by $D_c$. Let $R$ be the curvature tensor of $D$, and suppose that $c$ is a geodesic of $D$. We recall that a vector field $J\in\mathfrak X(c)$ is said to be a \emph{Jacobi field} if it satisfies the Jacobi equation $D_cD_cJ=R(\dot c,J)\dot c$. This is a second-order linear differential equation on the components of $J$, thus, given any vectors $v,w\in T_{c(0)}M$, there exists a unique Jacobi field $J$ defined on the whole domain of $c$ such that $J(0)=v$, $D_cJ(0)=w$.

Let $\overset\circ TM:=TM\setminus o(M)$, where $o\in\mathfrak X(M)$ is the zero
vector field, and consider the vector bundle $\overset\circ\pi:\overset\circ
TM\times_M TM\to\overset\circ TM$. Its fibre over $v$ is canonically isomorphic to the tangent space $T_{\tau(v)}M$, and hence the $C^\infty\big(\overset\circ TM\big)$-module $\Sec\big(\overset\circ\pi\big)$ of the sections of $\overset\circ\pi$ may be viewed as
\[
\Big\{\tilde X:\overset\circ TM\to TM\Big|
\tilde X\mbox{ is smooth, and }\tau\circ\tilde X=\tau\upharpoonright\overset\circ TM\Big\}.
\]
The module $\Sec\big(\overset\circ\pi\big)$ is generated by the \emph{basic sections} $\hat X:=X\circ\tau$, $X\in\mathfrak X(M)$. We have a canonical $C^\infty\big(\overset\circ TM\big)$-linear injection
\[
\mathbf i:\Sec\big(\overset\circ\pi\big)\to\mathfrak X\big(\overset\circ TM\big),
\ \hat X\mapsto\mathbf i\hat X:=X^{\mathsf v}:=\tau\mbox{-vertical lift of }X,
\]
and a canonical $C^\infty\big(\overset\circ TM\big)$-linear surjection $\mathbf j:\mathfrak X\big(\overset\circ TM\big)\to\Sec\big(\overset\circ\pi\big)$ such that for all $X\in\mathfrak X(M)$,
\[
\mathbf jX^{\mathsf v}=0,\ \mathbf jX^{\mathsf c}=\hat X
\]
($X^{\mathsf c}$ is the complete lift of $X$). Since $\mathbf i$ and $\mathbf j$ are tensorial, they have a natural pointwise interpretation which will be used automatically.

The push-forwards of a vector field $X$ on $M$ and a section $\tilde X\in\Sec\big(\overset\circ\pi\big)$ by a diffeomorphism $\varphi:M\to M$ are
\[
\varphi_\sharp X:=\varphi_*\circ X\circ\varphi^{-1}\mbox{ and }
\varphi_\sharp\tilde X:=(\varphi_*\times\varphi_*)\circ\tilde X\circ\varphi^{-1},\mbox{ resp}.
\]
By an \emph{Ehresmann connection} over $M$ we mean a $C^\infty\big(\overset\circ
TM\big)$-linear map $\mathcal H$ from $\Sec\big(\overset\circ\pi\big)$ into
$\mathfrak X\big(\overset\circ TM\big)$ such that $\mathbf j\circ\mathcal H$ is
the identity of $\Sec\big(\overset\circ\pi\big)$. To an Ehresmann connection
$\mathcal H$ we associate the horizontal projector $\mathbf h:=\mathcal
H\circ\mathbf j$, the vertical projector $\mathbf v:=1-\mathbf h$ and the vertical map $\mathcal V:=\mathbf i^{-1}\circ\mathbf v$. (For details we refer to \cite{PSz,Setting,SzL}.)

If $S$ is a spray over $M$ in the sense of Dazord \cite{Daz} (see also \cite{Grifone,Setting}), then a diffeomorphism $\varphi:M\to M$ is said to be an \emph{automorphism of $S$} if \mbox{$(\varphi_*)_\sharp S=S$.} An \emph{affinity} of $S$ is a diffeomorphism $\varphi$ of $M$ such that $\varphi\circ c$ is a geodesic of $S$ whenever $c$ is a geodesic of $S$. If $\mathcal H$ is an Ehresmann connection over $M$, then an \emph{automorphism of $\mathcal H$} is a diffeomorphism $\varphi:M\to M$ such that $\varphi_{**}\circ\mathcal H=\mathcal H\circ(\varphi_*\times\varphi_*)$. Finally, if
\[
\nabla:\mathfrak X\big(\overset\circ TM\big)\times\Sec\big(\overset\circ\pi\big)
\to\Sec\big(\overset\circ\pi\big)
\]
is a covariant derivative operator along $\overset\circ\pi$, then a diffeomorphism $\varphi$ of $M$ is called an \emph{automorphism of $\nabla$} if
\[
\varphi_\sharp\nabla_\xi\tilde Y=\nabla_{(\varphi_*)_\sharp\xi}\varphi_\sharp\tilde Y
\quad\Big(\xi\in\mathfrak X\big(\overset\circ TM\big),
\tilde Y\in\Sec\big(\overset\circ\pi\big)\Big).
\]

\section{Finsler manifolds}

A continuous function $F:TM\to\left[0,\infty\right[$ is said to be a
\emph{Finsler function} over a manifold $M$ if it is smooth on $\overset\circ
TM$, positive-homogeneous of degree 1, i.e., $F(\lambda v)=\lambda F(v)$ for all
$\lambda\in\left[0,\infty\right[$ and $v\in TM$, and has the property that the \emph{metric tensor} $g:\Sec\big(\overset\circ\pi\big)\times\Sec\big(\overset\circ\pi\big)\to C^\infty\big(\overset\circ TM\big)$ defined on the basic vector fields by
\[
g\big(\hat X,\hat Y\big):=\frac 12X^{\mathsf v}Y^{\mathsf v}F^2;\ X,Y\in\mathfrak X(M)
\]
is pointwise nondegenerate. Then $g$ is obviously symmetric, and it can also be shown that our conditions on $F$ imply the positive definiteness of the metric tensor \cite L. A manifold equipped with a Finsler function is said to be a \emph{Finsler manifold}. By the \emph{length} of a piecewise smooth curve $\gamma:[\alpha,\beta]\to M$ in a Finsler manifold $(M,F)$ we mean the integral $L(\gamma):=\int_\alpha^\beta F\circ\dot\gamma$. If $p$ and $q$ are points of $M$, and
\[
\Gamma(p,q):=\{\gamma:[\alpha,\beta]\to M|\gamma\mbox{ is piecewise smooth, and } \gamma(\alpha)=p,\ \gamma(\beta)=q\},
\]
then the function
\[
\varrho_F:M\times M\to\mathbb R,\ (p,q)\mapsto\varrho_F(p,q)
:=\inf_{\gamma\in\Gamma(p,q)}L(\gamma)
\]
is a \emph{quasi-distance} on $M$, i.e., has the following properties:
\begin{enumerate}
\item
$\varrho_F(p,q)\ge 0$, $\varrho_F(p,q)=0$, if and only if, $p=q$;
\item
$\varrho_F(p,s)\le\varrho_F(p,q)+\varrho_F(q,s)$ for all $p,q,s\in M$;
\item
the \emph{forward metric balls} $B_r^+(a):=\{p\in M|\varrho_F(a,p)<r\}$ and the \emph{backward metric balls} $B_r^-(a):=\{p\in M|\varrho_F(p,a)<r\}$ generate the same topology (namely, the underlying manifold topology)
\end{enumerate}
\cite{BCS,Pat}. Thus $(M,F)$ becomes a \emph{quasi-metric space} $(M,\varrho_F)$.

By the property (iii), there is a well-defined notion of the convergence of a
sequence in our quasi-metric space $(M,\varrho_F)$, thus there is no need to
speak about `forward convergence' and `backward convergence'. On the other hand, we have to distinguish between the notions of a forward Cauchy sequence and a backward Cauchy sequence. Namely, a sequence $(p_n)_{n\in\mathbb N}$ in $M$ is said to be a \emph{forward Cauchy sequence} if for any positive $\varepsilon$ there is a number $N\in\mathbb N$ such that
\[
\varrho_F(p_m,p_n)<\varepsilon\quad\mbox{whenever }N\le m\le n,
\]
and $(p_n)_{n\in\mathbb N}$ is said to be a \emph{backward Cauchy sequence} if for any positive $\varepsilon$ there is a number $N\in\mathbb N$ such that
\[
\varrho_F(p_n,p_m)<\varepsilon\quad\mbox{whenever }N\le m\le n.
\]
The quasi-metric space $(M,\varrho_F)$ is \emph{forward (backward) complete} if every forward (backward) Cauchy sequence converges, respectively. From the proof of Banach's fixed point theorem it is easy to see that it remains true for quasi-metric spaces if either of the two completeness properties is required. However, in our main theorem we shall assume forward rather than backward completeness, since forward completeness implies that the exponential map of the Finsler manifold $(M,F)$ is defined on the whole of each tangent space of $M$ \cite{BCS}.

A diffeomorphism $\varphi$ of $M$ is said to be a \emph{homothety} of the Finsler manifold $(M,F)$ if there is a positive real number $\lambda$ such that $F\circ\varphi_*=\lambda F$. If $\lambda=1$, then $\varphi$ is called an \emph{isometry} of $(M,F)$. A homothety is \emph{proper} if it is not an isometry. \emph{A homothety $\varphi$ of $(M,F)$ with proportionality factor $\lambda$ is also a homothety of the quasi-metric space $(M,\varrho_F)$ with the same proportionality factor.} Indeed, for any points $p,q\in M$ we obtain
\begin{align*}
\varrho_F(\varphi(p),\varphi(q))
&=\inf_{\gamma\in\Gamma(p,q)}\int_\alpha^\beta F\circ\dot{\fentkapcs{\varphi\circ\gamma}}
=\inf_{\gamma\in\Gamma(p,q)}\int_\alpha^\beta F\circ\varphi_*\circ\dot\gamma\\
&=\inf_{\gamma\in\Gamma(p,q)}\int_\alpha^\beta\lambda F\circ\dot\gamma
=\lambda\inf_{\gamma\in\Gamma(p,q)}\int_\alpha^\beta F\circ\dot\gamma=\lambda\varrho_F(p,q).
\end{align*}

\begin{rem}
In a rather forgotten paper \cite{Brick} F.~Brickell showed that a homeomorphism of a manifold equipped with a spray onto itself is a diffeomorphism if it preserves the geodesics considered as parametrized curves. Applyig this result he deduced that the isometry group of the quasi-metric space $(M,\varrho_F)$ coincides with the isometry group of the Finsler manifold $(M,F)$, generalizing a well-known theorem of S.~B.~Myers and N.~E.~Steenrod from Riemannian geometry. (This result of Brickell was rediscovered by S.~Deng and Z.~Hou \cite{DH}.) Then it follows that the isometry group of $(M,\varrho_F)$ is a Lie group, which implies, as M.~Patr\~ao showed \cite{Pat}, that \emph{the homothety group of $(M,\varrho_F)$ is also a Lie group}.
\end{rem}

\section{The Berwald connection. Curvatures}

First we recall that the Liouville vector field on $TM$ is the velocity field of the flow $(t,v)\in\mathbb R\times TM\mapsto e^tv\in TM$; it will be denoted by $C$.

We now come to what should be considered as the \emph{`fundamental lemma of
Finsler geometry'}. If $(M,F)$ is a Finsler manifold, then there exists a unique Ehresmann connection $\mathcal H$ over $M$ such that
\begin{enumerate}
\item
$\big[\mathcal H\hat X,C\big]=0$ for all $X\in\mathfrak X(M)$ ($\mathcal H$ is \emph{homogeneous}),
\item
$\big[\mathcal H\hat X,Y^{\mathsf v}\big]-\big[\mathcal H\hat Y,X^{\mathsf v}\big]-[X,Y]^{\mathsf v}=0$ for all $X,Y\in\mathfrak X(M)$ ($\mathcal H$ is \emph{torsion-free}),
\item
$dF\circ\mathcal H=0$ ($\mathcal H$ is \emph{conservative}).
\end{enumerate}

This connection is said to be the \emph{Berwald connection} of $(M,F)$. For a proof we refer to \cite{Grifone,Setting} (see also \cite{SzL}); we only note that if $S$ is the \emph{canonical spray} of the Finsler manifold determined by the Euler\,--\,Lagrange equation
\[
i_S\omega=-dF^2,\quad\omega(X^{\mathsf v},Y^{\mathsf c})
:=g\big(\hat X,\hat Y\big)
\quad(X,Y\in\mathfrak X(M)),
\]
then we have
\[
\mathcal H\big(\hat X\big)=\frac 12(X^{\mathsf c}+[X^{\mathsf v},S]),\quad X\in\mathfrak X(M).
\]
Using Berwald's connection, we define a Riemannian metric $\bar g$ on $\overset\circ TM$, nicely related to the metric tensor $g$ of $(M,F)$, as follows:
\[
\bar g(\xi,\eta):=g(\mathbf j\xi,\mathbf j\eta)+g(\mathcal V\xi,\mathcal V\eta);
\ \xi,\eta\in\mathfrak X\big(\overset\circ TM\big).
\]
Berwald's connection determines a covariant derivative operator
\[
\nabla:\mathfrak X\big(\overset\circ TM\big)\times\Sec\big(\overset\circ\pi\big)
\to\Sec\big(\overset\circ\pi\big)
\]
in $\overset\circ\pi$ by the rule
\[
\nabla_\xi\tilde Y:=\mathbf j\big[\mathbf v\xi,\mathcal H\tilde Y\big]
+\mathcal V\big[\mathbf h\xi,\mathbf i\tilde Y\big];
\ \xi\in\big(\overset\circ TM\big),\ \tilde Y\in\Sec\big(\overset\circ\pi\big),
\]
called \emph{Berwald's derivative}. Let $R^\nabla$ be the classical curvature tensor of $\nabla$. Then the type $(1,3)$ tensors $\mathbf H$ and $\mathbf B$ over $\Sec\big(\overset\circ\pi\big)$ given by
\[
\mathbf H\left(\tilde X,\tilde Y\right)\tilde Z
:=R^\nabla\left(\mathcal H\tilde X,\mathcal H\tilde Y\right)\tilde Z
\mbox{ and }\mathbf B\left(\tilde X,\tilde Y\right)\tilde Z
:=R^\nabla\left(\mathbf i\tilde X,\mathcal H\tilde Y\right)\tilde Z
\]
are said to be the \emph{affine} and the \emph{Berwald curvature} of $(M,F)$, respectively. An immediate calculation shows that for any sections $\tilde X,\tilde Y,\tilde Z$ in $\Sec\big(\overset\circ\pi\big)$ we have
\[
R^\nabla\left(\mathbf i\tilde X,\mathbf i\tilde Y\right)\tilde Z=0,
\]
therefore \emph{$R^\nabla$ vanishes, if and only if, the affine and the Berwald curvature of $(M,F)$ vanish}. In this case we say that the Finsler manifold $(M,F)$ is \emph{flat}. For some equivalents of flatness we refer to \cite[3.14, Theorem 3]{Setting}. Notice that flat Finsler manifolds are usually mentioned as \emph{locally Minkowski spaces}.

\section{The main result}

After two preparatory lemmas we prove the main result of the paper.

\begin{lem}
If $\varphi:M\to M$ is a homothety of a Finsler manifold $(M,F)$, then it is an automorphism of Berwald's covariant derivative.
\end{lem}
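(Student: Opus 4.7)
My plan is to bypass direct computation with the definition of $\nabla$ and instead appeal to the uniqueness clause of the fundamental lemma of Finsler geometry recalled in Section 4. Introduce the pull-back
\[
\mathcal H' := \varphi_{**}^{-1} \circ \mathcal H \circ (\varphi_* \times \varphi_*).
\]
Naturality of $\mathbf j$ (which is essentially $\tau_*$) under $\varphi_{**}$ gives $\mathbf j \circ \mathcal H' = \mathrm{id}$, so $\mathcal H'$ is an Ehresmann connection on $M$. If $\mathcal H'$ also satisfies the three characterising properties of Berwald's connection of $(M,F)$, then uniqueness forces $\mathcal H' = \mathcal H$, which is exactly the condition $\varphi_{**}\circ\mathcal H = \mathcal H\circ(\varphi_*\times\varphi_*)$ for $\varphi$ to be an automorphism of $\mathcal H$.

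Homogeneity and torsion-freeness of $\mathcal H'$ transfer from the corresponding properties of $\mathcal H$ by pure naturality: the Liouville field $C$ is preserved by $\varphi_{**}$ because $\varphi_*$ is fibrewise linear; the vertical and complete lifts intertwine with $\varphi_{**}$ in the familiar way, $(\varphi_*)_\sharp X^{\mathsf v} = (\varphi_\sharp X)^{\mathsf v}$ and $(\varphi_*)_\sharp X^{\mathsf c} = (\varphi_\sharp X)^{\mathsf c}$; and Lie brackets are natural under push-forward by a diffeomorphism. The one place the homothety hypothesis genuinely enters is conservativeness: differentiating $F \circ \varphi_* = \lambda F$ gives $dF \circ \varphi_{**} = \lambda\, dF$, hence $dF \circ \varphi_{**}^{-1} = \lambda^{-1}\, dF$, and therefore
\[
dF \circ \mathcal H' = \lambda^{-1}\, dF \circ \mathcal H \circ (\varphi_* \times \varphi_*) = 0,
\]
since $dF \circ \mathcal H = 0$. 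Hence $\mathcal H' = \mathcal H$, and $\varphi$ is an automorphism of $\mathcal H$.

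Once $\varphi$ preserves $\mathcal H$, it automatically preserves $\mathbf h = \mathcal H\circ\mathbf j$, $\mathbf v = 1 - \mathbf h$, and $\mathcal V = \mathbf i^{-1} \circ \mathbf v$. Substituting into the Berwald formula
\[
\nabla_\xi\tilde Y = \mathbf j[\mathbf v\xi,\, \mathcal H\tilde Y] + \mathcal V[\mathbf h\xi,\, \mathbf i\tilde Y]
\]
and using the naturality of Lie brackets and of $\mathbf i$, $\mathbf j$, one obtains the required identity $\varphi_\sharp\nabla_\xi\tilde Y = \nabla_{(\varphi_*)_\sharp\xi}\varphi_\sharp\tilde Y$.

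The main obstacle is bookkeeping rather than anything substantial: one has to keep straight the distinction between the push-forward $(\varphi_*)_\sharp$ acting on vector fields on $\overset\circ TM$ and the push-forward $\varphi_\sharp$ acting on sections of $\overset\circ\pi$, and to invoke each naturality statement in the right form. All of these facts are standard; the only genuine input from the homothety hypothesis is the scalar $\lambda$ in the conservativeness check, which cancels there.
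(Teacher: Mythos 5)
Your argument is correct, but it takes a genuinely different route from the paper. The paper's proof is a chain of reductions through the geodesics: the local length-minimizing property shows that a homothety is an affinity of the canonical spray $S$, whence by the cited results of P\'ek and Szilasi it is successively an automorphism of $S$, of the Berwald connection $\mathcal H$, and finally of $\nabla$. You instead go straight at $\mathcal H$: you form the pulled-back connection $\mathcal H'=\varphi_{**}^{-1}\circ\mathcal H\circ(\varphi_*\times\varphi_*)$ and invoke the uniqueness clause of the ``fundamental lemma'' of Section~4, checking that homogeneity and torsion-freeness transfer by pure naturality (preservation of $C$, of vertical and complete lifts, and of Lie brackets under a diffeomorphism) and that conservativeness is where the hypothesis $F\circ\varphi_*=\lambda F$ enters, the factor $\lambda^{-1}$ cancelling against $dF\circ\mathcal H=0$. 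All the naturality identities you use ($\mathbf j\circ\varphi_{**}^{-1}=(\varphi_*^{-1}\times\varphi_*^{-1})\circ\mathbf j$, $(\varphi_*)_\sharp X^{\mathsf v}=(\varphi_\sharp X)^{\mathsf v}$, $(\varphi_*)_\sharp X^{\mathsf c}=(\varphi_\sharp X)^{\mathsf c}$) are standard and do hold, and the final passage from preservation of $\mathcal H$ to preservation of $\nabla$ via the defining formula for Berwald's derivative is exactly the content of the lemma the paper cites at that point. What your approach buys is self-containedness and a clean isolation of where the homothety hypothesis is actually used (only in the conservativeness check, where the scalar is harmless); what the paper's approach buys is that the intermediate fact that $\varphi$ is an affinity of $S$ is established along the way, and that fact is reused later in the proof of the main theorem (where $\varphi^{n_0}\circ c$ is asserted to be a geodesic). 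If you adopted your proof, that affinity statement would still need a one-line separate justification.
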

\begin{proof}
By the local length minimizing property of geodesics of a Finsler manifold, $\varphi$ is an affinity of the canonical spray $S$. Then, by \cite[Lemma 5.1]{PSz}, $\varphi$ is also an automorphism of $S$, and by \cite[Lemma 6.1]{PSz}, it is thus an automorphism of Berwald's connection $\mathcal H$ as well. Finally, our assertion follows from \cite[Lemma 7.2]{PSz}.
\end{proof}

\begin{lem}
Let $D$ be a covariant derivative on $M$, $p\in M$, and
\[
\exp_p:U\subset T_pM\to M
\]
be the exponential map of $M$ at $p\in M$.
Let $v\in U$, and let $c$ (defined on an open interval containing $0$ and $1$) be the geodesic such that $c(0)=p$ and $\dot
c(0)=v$. Let $w\in T_pM$, and let $J$ be the Jacobi field
such that $J(0)=0$ and $D_cJ(0)=w$. Then we have
\[
\left(\exp_p\right)_*(w_v)=J(1).
\]
\end{lem}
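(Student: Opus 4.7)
The strategy is the classical one via a geodesic variation. Define
\[
H:(-\varepsilon,\varepsilon)\times I\to M,\qquad H(s,t):=\exp_p(t(v+sw)),
\]
where $I$ is an open interval containing $[0,1]$ and $\varepsilon>0$ is chosen so that $t(v+sw)\in U$ for every $(s,t)$ in the domain. For each fixed $s$, the curve $c_s:t\mapsto H(s,t)$ is the $D$-geodesic with $c_s(0)=p$ and $\dot c_s(0)=v+sw$; in particular $c_0=c$. Set $J(t):=\partial_s H(0,t)$, a vector field along $c$. Evaluating at $t=1$ gives
\[
J(1)=\partial_s\exp_p(v+sw)\big|_{s=0}=\bigl(\exp_p\bigr)_*(w_v),
\]
which is already the left-hand side of the desired identity. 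The task thus reduces to showing that $J$ is the Jacobi field with $J(0)=0$ and $D_cJ(0)=w$.

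The initial value is immediate: $H(s,0)=p$ for every $s$, so $J(0)=0$. For the initial covariant derivative, I use the symmetry $D_{\partial_t}\partial_s H=D_{\partial_s}\partial_t H$ of mixed covariant derivatives for a torsion-free covariant derivative operator, which is applicable to Berwald's connection in the pertinent sense. Since $\partial_t H(s,0)=v+sw\in T_pM$ lies along the constant curve $s\mapsto p$, the covariant derivative in $s$ reduces to the ordinary derivative, giving $D_cJ(0)=w$ as required.

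To verify the Jacobi equation, differentiate the geodesic identity $D_{\partial_t}\partial_t H\equiv 0$ with respect to $s$ and invoke the standard curvature formula for a two-parameter map,
\[
D_{\partial_s}D_{\partial_t}\xi-D_{\partial_t}D_{\partial_s}\xi
=R(\partial_s H,\partial_t H)\xi,
\]
applied to $\xi=\partial_t H$. Using the torsion-free symmetry once more to replace $D_{\partial_s}\partial_t H$ by $D_{\partial_t}\partial_s H$, then restricting to $s=0$, yields
\[
D_cD_cJ+R(J,\dot c)\dot c=0,
\]
i.e.\ the Jacobi equation $D_cD_cJ=R(\dot c,J)\dot c$. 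By the uniqueness of Jacobi fields with prescribed $J(0)$ and $D_cJ(0)$ stated in the preliminaries, $J$ is the required Jacobi field, and the identity follows.

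The argument is standard and presents no serious obstacle; the only subtle point is the computation of $D_cJ(0)$, which requires combining the torsion-free symmetry of mixed covariant derivatives with the observation that along the constant curve $s\mapsto p$ any covariant derivative reduces to ordinary differentiation of the $T_pM$-valued curve $s\mapsto v+sw$. Everything else is a direct application of the definitions together with the curvature identity for variations.
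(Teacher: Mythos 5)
Your proof is correct and is precisely the standard geodesic-variation argument; the paper offers no proof of its own here, merely citing do Carmo and Lang, whose proofs are the one you reproduce. The only caveat is that the lemma as stated admits an arbitrary covariant derivative $D$, while your use of the symmetry $D_{\partial_s}\partial_t H=D_{\partial_t}\partial_s H$ (and the form of the Jacobi equation itself) requires $D$ to be torsion-free --- a point you flag, and which holds for the connection to which the lemma is actually applied.
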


This is essentially a reformulation of \cite[Chap.\ 5, 2.5 Corollary]{doCar}, see also \cite[Chapter IX, Theorem 3.1]{Lang}.

\begin{thm}
If a forward complete connected finite-dimensional Finsler manifold admits a proper
homothety onto itself, then it is isometric to a Minkowski vector space, namely, to the tangent Minkowski vector space at the fixed point of the homothety.
\end{thm}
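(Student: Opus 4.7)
Without loss of generality the proportionality factor $\lambda$ of $\varphi$ is strictly less than $1$; otherwise one replaces $\varphi$ by $\varphi^{-1}$, which is also a proper homothety. Since the computation displayed earlier shows that $\varphi$ is then a $\lambda$-contraction of the forward complete quasi-metric space $(M,\varrho_F)$, Banach's fixed point theorem produces a unique fixed point $p\in M$. The restricted tangent map $A:=\varphi_{*p}\colon T_pM\to T_pM$ satisfies $F\circ A=\lambda F$ and therefore factors as $A=\lambda U$, where $U$ is a linear isometry of the Minkowski space $(T_pM,F|_{T_pM})$.

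The next step is to show that $(M,F)$ is flat. By Lemma~1, $\varphi$ is an automorphism of $\nabla$, and hence of its curvatures $\HH$ and $\BB$, giving pointwise
\[
\varphi_*^n\bigl(\HH_v(X,Y)Z\bigr)=\HH_{\varphi_*^n v}(\varphi_*^n X,\varphi_*^n Y,\varphi_*^n Z)
\]
for every $v\in\overset\circ TM$, $X,Y,Z\in T_{\tau(v)}M$, and an analogous formula for $\BB$. Taking $F$-norms, using $F\circ\varphi_*^n=\lambda^n F$, the positive homogeneity of $\HH$ (degree $0$) and $\BB$ (degree $-1$) in the base vector, and the uniform boundedness of the curvatures over the compact set $\{\varphi^n(\tau(v)):n\ge 0\}\cup\{p\}$ (compact because $\varphi^n(\tau(v))\to p$), one obtains $F(\HH_v(X,Y)Z)\le c\,\lambda^{2n}$ and $F(\BB_v(X,Y)Z)\le c\,\lambda^{n}$ with $c$ independent of $n$. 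Letting $n\to\infty$ forces $\HH\equiv0\equiv\BB$, so $(M,F)$ is locally Minkowski.

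Let $\exp_p\colon T_pM\to M$ be the exponential map of the canonical spray $S$; by forward completeness it is defined on all of $T_pM$. For $v\in T_pM$ and $w\in T_pM\cong T_v(T_pM)$, Lemma~2, applied to a covariant derivative on $M$ whose geodesics coincide with those of $S$, gives $(\exp_p)_*(w_v)=J(1)$, where $J$ is the Jacobi field along $c(t):=\exp_p(tv)$ with $J(0)=0$ and $D_cJ(0)=w$. Flatness reduces the Jacobi equation to $D_cD_cJ=0$, so $J(t)$ equals $t$ times the parallel transport of $w$ to $c(t)$. Conservativity of Berwald's connection ($dF\circ\mathcal H=0$) implies that this parallel transport preserves $F$, so $F\bigl((\exp_p)_*(w_v)\bigr)=F|_{T_pM}(w)$, and $\exp_p$ is a local isometry from the Minkowski space $(T_pM,F|_{T_pM})$ onto $(M,F)$.

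The last and essential step is to upgrade $\exp_p$ to a global isometry. Because $(T_pM,F|_{T_pM})$ is complete and $\exp_p$ is a surjective local isometry, $\exp_p$ is a covering map; simple-connectedness of $T_pM$ makes it the universal cover of $M$, so $M\cong T_pM/\Gamma$ for a discrete group $\Gamma$ of affine $F|_{T_pM}$-isometries acting freely and properly discontinuously. The identity $\varphi\circ\exp_p=\exp_p\circ A$ (coming from $\varphi(p)=p$ together with $\varphi$ being a spray automorphism) shows that conjugation by $A$ normalizes $\Gamma$. Writing a nontrivial $\gamma\in\Gamma$ as $\gamma(w)=U_0w+v_0$ with $U_0$ a linear $F$-isometry, the conjugate $\gamma_n:=A^n\gamma A^{-n}\in\Gamma$ is
\[
\gamma_n(w)=(U^n U_0 U^{-n})w+\lambda^n U^n v_0.
\]
Its translation part has norm $\lambda^n F|_{T_pM}(v_0)\to 0$, so $\gamma_n(0)\to 0$; but the $\Gamma$-orbit of $0$ is a discrete subset of $T_pM$, so $\gamma_n(0)=0$ for large $n$, making $\gamma_n$ fix $0$. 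A nontrivial deck transformation has no fixed point, so $\gamma_n$, and hence $\gamma$, must be the identity. Thus $\Gamma$ is trivial, $\exp_p$ is a diffeomorphism, and combined with the previous step it is a global isometry. The hard part will be this last step---conjugating deck transformations by iterates of $A$ to shrink their translation parts to zero---which the authors single out as the new contribution of the paper.
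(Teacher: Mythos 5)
Your proposal is correct, and the first two thirds (Banach fixed point, flatness via the $\lambda^{2n}$-scaling of the curvature against a uniform bound near $p$, and the local-isometry property of $\exp_p$ via Jacobi fields and $F$-preserving parallel transport) follow essentially the same lines as the paper. Where you genuinely diverge is the final globalization step. The paper proves injectivity of $\exp_p$ directly: if $\exp_p(v)=\exp_p(w)$, the image of the segment from $v$ to $w$ is a geodesic segment of $M$ with coinciding endpoints; since the sets $(\varphi^{-1})^n(\mathcal V)$ cover $M$ (where $\mathcal V$ is a Minkowski chart around $p$), compactness gives an $n_0$ with $\varphi^{n_0}\circ c$ lying entirely in $\mathcal V$, and a geodesic segment with coinciding endpoints cannot exist in an open subset of a Minkowski space --- a contradiction. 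You instead invoke covering space theory: $\exp_p$ is a covering, $M\cong T_pM/\Gamma$, and conjugating a deck transformation by $A^n=(\lambda U)^n$ shrinks its translation part to $0$, forcing $\Gamma$ to be trivial by discreteness and freeness. Your argument is valid and arguably more structural --- it makes the role of the linearized homothety $A$ at the fixed point transparent and recovers the space-form picture --- but it rests on two facts you assert rather than prove, both of which need a word of care in the non-reversible Finsler setting: that a surjective local isometry from the forward- and backward-complete space $(T_pM,F|_{T_pM})$ onto $M$ is a covering map, and that deck transformations are affine (Mazur--Ulam does not apply verbatim to a quasi-metric, but deck transformations are isometries of the flat Finsler manifold $T_pM$, hence affinities of its canonical spray, whose geodesics are affinely parametrized straight lines, so they are indeed affine). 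The paper's loop-shrinking argument avoids both of these auxiliary facts and is in that sense more elementary and self-contained, at the cost of being less conceptual.
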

\begin{proof}
Let $(M,F)$ be our Finsler manifold and $\varphi$ be a homothety of $(M,F)$ with proportionality factor $\lambda$. We may
assume that $0<\lambda<1$ (otherwise take $\varphi^{-1}$ instead of $\varphi$). Then, as we have just seen, $\varphi$ is also a homothety of $(M,\varrho_F)$, thus,
Banach's fixed point theorem implies the existence of a unique fixed point $p$ of $\varphi$,
i.e., a point $p\in M$ such that $\varphi(p)=p$.

First we prove that $(M,F)$ is flat. In our calculations we shall use the fact
that the curvatures $R^\nabla$, $\HH$ and $\BB$ are tensorial in all of their arguments, thus, they can also be evaluated on single vectors rather than vector fields on $\overset\circ TM$ and along $\overset\circ\tau$.

Let $U$ be an open neighbourhood of $p$ such that $\overline U$ is compact, and let
\begin{multline*}
r:=\sup\left\{g_u\left(R^\nabla_u(z_1,z_2)v,w\right)\big|
u\in TU,z_1,z_2\in T_uTM,v,w\in T_{\tau(u)}M,\right.\\
F(u)=1,\bar g_u(z_1,z_1)=\bar g_u(z_2,z_2)=g_u(v,v)=g_u(w,w)=1\big\}.
\end{multline*}
Now we show that
\begin{equation*}
R^\nabla_{\varphi_*u}(\varphi_{**}z_1,\varphi_{**}z_2)\varphi_*v
=\varphi_*R^\nabla_u(z_1,z_2)v\tag{$\ast$}
\end{equation*}
for any $q\in M$, $u\in\overset\circ T_qM$, $z_1,z_2\in T_uTM$, $v\in T_qM$.
Indeed, let $\xi,\eta$ be vector fields on $\overset\circ TM$ such that
$\xi(u)=z_1$, $\eta(u)=z_2$, and $Z\in\mathfrak X(M)$ such that $Z(q)=v$. Since,
by Lemma 1, $\varphi$ is an automorphism of $\nabla$, $R^\nabla$ is preserved by $\varphi_*$. Thus we obtain
\begin{multline*}
R^\nabla_{\varphi_*u}(\varphi_{**}z_1,\varphi_{**}z_2)\varphi_*v
=R^\nabla_{\varphi_*u}(\varphi_{**}\xi(u),\varphi_{**}\eta(u))\varphi_*Z(u)\\
=\left(R^\nabla((\varphi_*)_\sharp\xi,(\varphi_*)_\sharp\eta)\widehat{\varphi_\sharp Z}\right)(\varphi_*u)
=\varphi_\sharp\left(R^\nabla(\xi,\eta)\hat Z\right)(\varphi_*u)\\
=\varphi_*\left(R^\nabla_u(\xi(u),\eta(u))Z(u)\right)=\varphi_*R^\nabla_u(z_1,z_2)v,
\end{multline*}
which proves ($\ast$). If, in addition, $w\in T_qM$, then we have
\begin{multline*}
g_{\varphi_*u}\left(R^\nabla_{\varphi_*u}(\varphi_{**}z_1,\varphi_{**}z_2)\varphi_*v,
\varphi_*w\right)\\
=g_{\varphi_*u}\left(\varphi_*R^\nabla_u(z_1,z_2)v,\varphi_*w\right)
=\lambda^2g_u\left(R^\nabla_u(z_1,z_2)v,w\right),
\end{multline*}
and, if $n\in\mathbb N$, then
\[
g_{\varphi^n_*u}\left(R^\nabla_{\varphi^n_*u}(\varphi^n_{**}z_1,\varphi^n_{**}z_2)\varphi^n_*v,
\varphi^n_*w\right)=\lambda^{2n}g_u\left(R^\nabla_u(z_1,z_2)v,w\right)
\]
by induction. Now suppose that
\[
F(u)=1\quad\mbox{and}\quad\bar g_u(z_1,z_1)=\bar g_u(z_2,z_2)=g_u(v,v)=g_u(w,w)=1.
\]
In that case we have
\begin{multline*}
F(\varphi^n_*u)=\lambda^n\quad
\mbox{and}\quad\bar g_{\varphi^n_*(u)}(\varphi^n_{**}z_1,\varphi^n_{**}z_1)
=\bar g_{\varphi^n_*u}(\varphi^n_{**}z_2,\varphi^n_{**}z_2)\\
=g_{\varphi^n_*u}(\varphi^n_{**}v,\varphi^n_{**}v)
=g_{\varphi^n_*u}(\varphi^n_*w,\varphi^n_*w)=\lambda^{2n}.
\end{multline*}
The sequence $(\varphi^n(q))_{n\in\mathbb N}$ converges to $p$, thus there is an index $n_0\in\mathbb N$ such that $\varphi^n(q)\in U$ for all $n\ge n_0$. Therefore
\begin{align*}
&\left|g_{\varphi^n_*u}\left(R^\nabla_{\varphi^n_*u}(\varphi^n_{**}z_1,\varphi^n_{**}z_2)
\varphi^n_*v,\varphi^n_*w\right)\right|\\
&=\left|g_{\varphi^n_*u}\left(R^\nabla_{\varphi^n_*u}
(\hh\varphi^n_{**}z_1,\hh\varphi^n_{**}z_2)
\varphi^n_*v,\varphi^n_*w\right)\right.\\
&\quad+g_{\varphi^n_*u}\left(R^\nabla_{\varphi^n_*u}
(\vv\varphi^n_{**}z_1,\hh\varphi^n_{**}z_2)
\varphi^n_*v,\varphi^n_*w\right)\\
&\quad\left.+g_{\varphi^n_*u}\left(R^\nabla_{\varphi^n_*u}
(\hh\varphi^n_{**}z_1,\vv\varphi^n_{**}z_2)
\varphi^n_*v,\varphi^n_*w\right)\right|\\
&\le\left|g_{\varphi^n_*u}\left(\HH_{\varphi^n_*u}(\mathbf j\varphi^n_{**}z_1,
\mathbf j\varphi^n_{**}z_2)\varphi^n_*v,\varphi^n_*w\right)\right|\\
&\quad+\left|g_{\varphi^n_*u}\left(\BB_{\varphi^n_*u}(\mathcal V\varphi^n_{**}z_1,
\mathbf j\varphi^n_{**}z_2)\varphi^n_*v,\varphi^n_*w\right)\right|\\
&\quad+\left|g_{\varphi^n_*u}\left(\BB_{\varphi^n_*u}(\mathcal V\varphi^n_{**}z_2,
\mathbf j\varphi^n_{**}z_1)\varphi^n_*v,\varphi^n_*w\right)\right|\\
&\le\left(\lambda^{4n}+2\lambda^{3n}\right)r
\end{align*}
if $n\ge n_0$. Comparing the two expressions for the curvature tensor, we obtain
\[
\left|g_u\left(R^\nabla_u(z_1,z_2)v,w\right)\right|
\le\left(\lambda^{2n}+2\lambda^n\right)r\quad(n\ge n_0),
\]
from which we have $g_u\left(R^\nabla_u(z_1,z_2)v,w\right)=0$ by taking the limit $n\to\infty$. Non-degeneracy of $g$ implies that $R^\nabla_u(z_1,z_2)v=0$, and by the
homogeneity of $R^\nabla$ in $u$ and its linearity in its arguments, it follows that
$R^\nabla$ vanishes identically. This proves that $(M,F)$ is flat.

From the vanishing of $\BB$ it also follows that $\nabla$ is \emph{basic} in the
sense that there is a covariant derivative operator $D$ on $M$ such that
$\nabla_{\mathcal H\hat X}\hat Y=\widehat{D_XY}$ for any $X,Y\in\mathfrak X(M)$. Now we show
that $\exp_p:T_pM\to M$, the exponential map of $D$ in $p$ (which is just the exponential map associated to the canonical spray $S$), is a local
isometry between the vector space $T_pM$ equipped with the norm $F\upharpoonright T_pM$ and
the Finsler manifold $(M,F)$. (Due to the forward completeness of $M$, $\exp_p$ is indeed
defined on the whole of $T_pM$.) Let $v,w\in T_pM$ and $c:\left[0,\infty\right[\to M$
be as in Lemma 2, and let $X$ be the unique parallel vector field
along $c$ with $X(0)=w$. Since $\HH=0$, the
curvature of $D$ also vanishes, thus in this case the Jacobi equation has the very
simple form
\[
D_cD_cJ=0.
\]
If $J(t):=tX(t)$ ($t\in\left[0,\infty\right[$), then
\[
D_cJ(t)=X(t)+tD_cX(t)=X(t),\ D_cD_cJ=0,\mbox{ and }D_cJ(0)=X(0)=w,
\]
thus $J$ is just the Jacobi field along $c$ which features in Lemma 2. Since $X$
is parallel as a vector field along $c$, it is horizontal as a curve running in
$\overset\circ TM$. (We can ignore the trivial case when $w=0$.) Therefore $F$
is constant along $X$, and
\[
F\left(\left(\exp_p\right)_*(w_v)\right)=F(J(1))=F(X(1))=F(X(0))=F(w),
\]
which means that $\exp_p$ is a local isometry.

Finally we show that $\exp_p:T_pM\to M$ is in fact a (global) isometry. It is enough to
check that $\exp_p$ is injective, since its surjectivity will then follow from
the connectedness of $M$. Suppose, indirectly, that there are two
vectors $v,w\in T_pM$ with $\exp_p(v)=\exp_p(w)$, and consider the
parametrized straight line segment $t\in[0,1]\mapsto v+t(w-v)$, which is a
geodesic segment of the Minkowski vector space $T_pM$. Being a geodesic is a local
property, thus
\[
c:[0,1]\to M,\quad c(t):=\exp_p(v+t(w-v))\quad(t\in[0,1])
\]
is also a geodesic segment of $M$, whose starting point and end point coincide in addition. Let $\mathcal U\subset T_pM$ be
an open star-shaped neighbourhood of 0 such that $\exp_p\upharpoonright\mathcal U:\mathcal U\to\mathcal V\subset M$ is an isometry. The sets
\[
\left(\varphi^{-1}\right)^n(\mathcal V),\ n\in\mathbb N
\]
cover $M$, and since $c([0,1])$ is compact, there is an index $n_0$ such that
$c([0,1])\subset\left(\varphi^{-1}\right)^{n_0}(\mathcal V)$, or, equivalently,
$\varphi^{n_0}(c([0,1]))\subset\mathcal V$, thus $\varphi^{n_0}\circ c$ is a
geodesic segment in $\mathcal V$ with coinciding starting and end point, which contradicts the fact that $\mathcal V$ is isometric to an open
subset of a Minkowski space, in which geodesics are straight lines. This
completes the proof that $\exp_p$ is an isometry.
\end{proof}

\section*{Acknowledgement}

We are grateful to Professor Shoshichi Kobayashi for the stimulating discussion on the Riemannian aspects of the problem via e-mail.

\noindent
Rezs\H{o} L.~Lovas \textsf{lovasr@math.unideb.hu}\\
J\'ozsef Szilasi \textsf{szilasi@math.unideb.hu}\\
Institute of Mathematics, University of Debrecen\\
H--4010 Debrecen, P.O.Box 12, Hungary
\end{document}